\setlist{topsep=0mm,parsep=0mm,itemsep=.5mm}
\DeclareMathAlphabet{\mathpzc}{OT1}{pzc}{m}{it}
\newcommand{\pzf}{\mathpzc{F}}
\newcommand{\catset}{{\mathbf{Set}}}
\newcommand{\catiord}{\mathbf{IOrd}}
\newcommand{\catord}{\mathbf{Ord}}
\newcommand{\catc}{\mathbb{C}}
\newcommand{\catx}{\mathbb{X}}
\newcommand{\pcaa}{\mathcal{A}}
\newcommand{\trip}{{\EuScript{P}}}
\newcommand{\catrt}{\mathbf{RT}}
\newcommand{\comk}{\mathsf{k}}				
\newcommand{\comc}{\mathsf{c}}
\newcommand{\comp}{\mathsf{p}}
\newcommand{\compz}{\mathsf{p_0}}
\newcommand{\compo}{\mathsf{p_1}}
\newcommand{\msep}{\mathrel|\,}		
\newcommand{\op}{^\mathsf{op}}				
\newcommand{\id}{\mathrm{id}}				
\newcommand{\ap}{\mathclose\cdot}			
\newcommand{\qdot}{\,.\,}				
\newcommand{\setof}[2]{\{#1\msep #2\}}
\newcommand{\N}{\mathbb{N}}
\newcommand{\mono}{\rightarrowtail}
\newcommand{\monol}{\leftarrowtail}
\newcommand{\incl}{\hookrightarrow}
\newcommand{\epi}{\twoheadrightarrow}
\newcommand{\epicart}{\xymatrix@1@-3mm{{}\epicart[r]&{}}}
\newcommand{\covercart}{\xymatrix@1@-3mm{{}\epicart[r]&{}}}
\providecommand*{\twoheadrightarrowfill@}{%
  \arrowfill@\relbar\relbar\twoheadrightarrow
}
\providecommand*{\twoheadleftarrowfill@}{%
  \arrowfill@\twoheadleftarrow\relbar\relbar
}
\providecommand*{\xepi}[2][]{%
  \ext@arrow 0579\twoheadrightarrowfill@{#1}{#2}%
}
\providecommand*{\xepil}[2][]{%
  \ext@arrow 5097\twoheadleftarrowfill@{#1}{#2}%
}
\newcommand*{\relrelbarsep}{.386ex}
\newcommand*{\relrelbar}{%
  \mathrel{%
    \mathpalette\@relrelbar\relrelbarsep
  }%
}
\newcommand*{\@relrelbar}[2]{%
  \raise#2\hbox to 0pt{$\m@th#1\relbar$\hss}%
  \lower#2\hbox{$\m@th#1\relbar$}%
}
\providecommand*{\rightrightarrowsfill@}{%
  \arrowfill@\relrelbar\relrelbar\rightrightarrows
}
\providecommand*{\leftleftarrowsfill@}{%
  \arrowfill@\leftleftarrows\relrelbar\relrelbar
}
\providecommand*{\xrightrightarrows}[2][]{%
  \ext@arrow 0359\rightrightarrowsfill@{#1}{#2}%
}
\providecommand*{\xleftleftarrows}[2][]{%
  \ext@arrow 3095\leftleftarrowsfill@{#1}{#2}%
}
\newcommand{\tensor}{\otimes}
\newcommand{\abs}[1]{\lvert#1\rvert}
\newcommand{\ve}{\varepsilon}
\newcommand{\qtext}[1]{\quad\text{#1}\quad}
\newcommand{\qqtext}[1]{\qquad\text{#1}\qquad}
\newcommand{\imp}{\Rightarrow}
\newcommand{\famf}{\mathrm{fam}}
\newcommand{\pullbackcorner}[1][dr]{\save*!/#1-1.2pc/#1:(-1,1)@^{|-}\restore}
\newcommand{\adj}{\dashv}
\newcommand{\pto}{\rightharpoonup}
\newcommand{\proj}{\mathbf{Proj}}
\newcommand{\catlex}{\mathbf{Lex}}
\newcommand{\catex}{\mathbf{Ex}}
\newcommand{\exlex}[1]{{#1}_{\mathrm{ex}}}
\newcommand{\adjr}[2]{
\ar@/_6pt/[r]_{#1}
\ar@{}[r]|\top
\ar@{<-}@/^6pt/[r]^{#2}
}
\newcommand{\adjrr}[2]{
\ar@/_6pt/[rr]_{#1}
\ar@{}[rr]|\top
\ar@{<-}@/^6pt/[rr]^{#2}
}
\newcommand{\adjl}[2]{
\ar@/^6pt/[l]^{#1}
\ar@{}[l]|\top
\ar@{<-}@/_6pt/[l]_{#2}
}
\newcommand{\adjd}[2]{
\ar@/_6pt/[d]_{#1}
\ar@{}[d]|\dashv
\ar@{<-}@/^6pt/[d]^{#2}
}
\newcommand{\adju}[2]{
\ar@/^6pt/[u]^{#1}
\ar@{}[u]|\dashv
\ar@{<-}@/_6pt/[u]_{#2}
}
\newcommand{\dadj}[4]{ 
\ar@/^6pt/[#3]^{#1}
\ar@{}[#3]|{#4}
\ar@{<-}@/_6pt/[#3]_{#2}
}
\newcommand{\fifc}{\EuScript{C}}
\newcommand{\fifp}{\EuScript{P}}
\newcommand{\defined}{\mathclose\downarrow}
\newcommand{\les}{\preceq}
\newcommand{\ges}{\succeq}
\newcommand{\pcakone}{\mathcal{K}_1}
\newcommand{\conflict}{\ar@{~}}
\def\signed #1{{\leavevmode\unskip\nobreak\hfil\penalty50\hskip2em
  \hbox{}\nobreak\hfil(#1)%
  \parfillskip=0pt \finalhyphendemerits=0 \endgraf}}
\newsavebox\mybox
\newcommand{\gcons}{\textstyle\int\!}
\newcommand{\inv}{^{-1}}
\newcommand{\setord}{\catset\op\to\catord}
\newcommand{\pasm}{\mathbf{PAsm}}
\newcommand{\subs}{\subseteq}
\newcommand{\sups}{\supseteq}
\newcommand{\catdco}{\mathbf{DCO}}
\newcommand{\daf}{(A,\pzf_A)}
\newcommand{\dbg}{(B,\pzf_B)}
\newcommand{\funa}{{\pzf_A}}
\newcommand{\funb}{{\pzf_B}}
\newcommand{\const}{\mathsf{c}}
\newcommand{\att}{@} 
\newcommand{\pcad}{(\pcaa,\cdot)}
\newcommand{\tatu}[2]{\langle #1,#2\rangle}
\newcommand{\lar}{\ar@{-|>}} 
\newcommand{\rar}{\ar@{ |>->}}
\newcommand{\dotminus}{\mathbin{\text{\@dotminus}}}
\newcommand{\@dotminus}{%
  \ooalign{\hidewidth\raise1ex\hbox{.}\hidewidth\cr$\m@th-$\cr}%
}
\newcommand{\speq}{\;=\;}
\newcommand{\ie}{i.e.\ }
\newcommand{\eg}{e.g.\ }
\newcommand{\wlcc}{w.l.c.c.\ }
\newcommand{\xotn}{x_1, \dots, x_n}
\newtheorem{theorem}{Theorem}[section]
\newtheorem{proposition}[theorem]{Proposition}
\newtheorem{lemma}[theorem]{Lemma}
\newtheorem{corollary}[theorem]{Corollary}
\newtheorem{definition}[theorem]{Definition}
\newtheorem{remark}[theorem]{Remark}
\newtheorem{remarks}[theorem]{Remarks}
\newtheorem{example}[theorem]{Example}
\newtheorem{conventions}[theorem]{Conventions}
\theoremstyle{nonumberplain}
\newtheorem{proof}{Proof.}
\numberwithin{equation}{section}		
\newcommand{\depi}{\ar@{->>}}
\newcommand{\emar}{\ar@{}}
\title{Characterizing partitioned assemblies \\ and realizability toposes}
\author{Jonas Frey}
\begin{document}

\maketitle

\begin{abstract}
  We give simple characterizations of the category $\pasm(\pcaa)$ of
  \emph{partitioned assemblies}, and of the \emph{realizability topos}
  $\catrt(\pcaa)$ over a partial combinatory algebra $\pcaa$.
This answers the question for an `extensional characterization' of realizability
toposes.
\end{abstract}

\section{Introduction}

Realizability toposes $\catrt\pcad$ over partial combinatory algebras (PCAs) 
$\pcad$ were introduced in 1980 by Hyland, Johnstone and Pitts~\cite{hjp80} as 
categories of internal partial equivalence relations in certain indexed 
preorders which they called \emph{triposes}. 
In 1990, Robinson and Rosolini~\cite{robinson1990colimit} showed that Hyland's 
\emph{effective topos} $\catrt(\pcakone,\cdot)$~\cite{hyland82} -- the most well 
known realizability topos, constructed from the PCA
$(\pcakone,\cdot)$ known as \emph{first Kleene algebra} -- 
is the \emph{exact completion}~\cite{carboni1982free} of the category 
$\pasm(\pcakone,\cdot)$ of \emph{partitioned assemblies} over 
$(\pcakone,\cdot)$, a result that easily generalizes to realizability toposes
over arbitrary PCAs.

The present article gives an extensional characterization of categories 
of partitioned assemblies over PCAs (Theorem~\ref{thm:char-pasm}, 
Corollary~\ref{cor:char-pasm}), which by means of (the generalization of) 
Robinson and Rosolini's result yields a characterization of realizability 
toposes over PCAs (Proposition~\ref{prop:char-rt}, Theorem~\ref{thm:char-rt'}), 
and on the other hand justifies the definition of PCA by reconstructing it from 
abstract concepts. The notion of PCA that drops out of our reconstruction is a 
bit more general than the classical notion used \eg 
in~\cite{hjp80, vanoosten2008realizability}, but seems to have been adapted as 
standard in more recent literature~\cite{stekelenburg2013realizability,
johnstone2013,faber2014more,faber2016effective}. 
For the purpose of characterizing partitioned assemblies and realizability 
toposes the distinction is immaterial by a result of Faber and van Oosten which 
says that any PCA in the more general sense can be 
`strictified'~\cite[Theorem~5.8]{faber2016effective}.

For the characterization of partitioned assemblies we adapt techniques from 
Hofstra's analysis of \emph{ordered partial combinatory algebras} (OPCAs) in 
terms of his \emph{basic combinatory objects} 
(BCOs), which are partial orders equipped with a class of partial endofunctions 
subject to certain axioms. Every OPCA gives rise to a BCO, and Hofstra 
characterizes (inclusions of) OPCAs among 
BCOs~\cite[Propositions~6.5, 6.6]{hofstra2006all}. Moreover he gives a new 
perspective on partitioned assemblies by embedding BCOs into indexed preorders 
in such a way that the total category of the indexed preorder associated to a 
BCO arising from an OPCA is precisely the category of partitioned assemblies 
over the OPCA.

Since we are only interested in non-ordered PCAs we adapt Hofstra's analysis by 
restricting attention to non-ordered BCOs, which we call 
\emph{discrete combinatory objects} (DCOs). This way we obtain a 
characterization of PCAs among DCOs in analogy to Hofstra's characterization of 
OPCAs among BCOs (Corollary~\ref{cor:char-pca}), and moreover there is an easy 
description of the indexed preorders arising from DCOs 
(Proposition~\ref{prop:dcogenpred}), so that in combination we can identify PCAs with 
a class of indexed preorders. To obtain a characterization of categories of 
partitioned assemblies it remains to characterize the total categories of these 
indexed preorders, which we do by identifying certain properties of these total 
categories which allow to reconstruct the indexed preorders, and thus the PCAs. 

A crucial insight here is that whereas indexed preorders can generally not be 
reconstructed from their total categories, those which arise from PCAs 
\emph{can}, since in this case the forgetful functor from the total category 
(which is the necessary additional datum to reconstruct the indexed preorder) 
coincides with the global sections functor. To capture this phenomenon we 
introduce the notions of \emph{shallow indexed preorder} 
(Section~\ref{sec:shallow-cartesian}) and \emph{local category} 
(Section~\ref{sec:local-categories}).

\subsection{Related work}

The present work, and Hofstra's analysis of OPCAs in terms of BCOs, fit into a 
line of work whose general theme is to first generalize the construction of 
realizability models (triposes, toposes, assemblies) from PCAs to a more general 
class of `combinatory structures' and then to identify conditions on these 
structures which ensure certain logical properties of the 
model~\cite{lietz2002impredicativity,birkedal2000general,robinson2001abstract}.
In particular, Robinson and Rosolini~\cite[Corollary~2]{robinson2001abstract} 
give necessary and sufficient conditions for the realizability categories over a 
specific type of categories of partial maps to be locally cartesian closed, and 
Lietz and Streicher~\cite[Theorem~4.2]{lietz2002impredicativity} -- and in a 
similar form Birkedal~\cite[Corollary~5.3]{birkedal2000general} -- show that 
realizability categories over certain typed combinatory structures are toposes 
if and only if the structures have a `universal type'.

The main novelty of the present work is the \emph{reconstruction} of the 
combinatory structure from the realizability category, which allows an intrinsic characterization without reference to the combinatory structure. This in turn 
leads to concepts related to Menni's axiomatic 
approach~\cite{menni2003characterization,menni2002more} to the study of realizability-like exact completions, see 
Remark~\ref{rem:local}-\ref{item:menni}.

\section{Discrete combinatory objects}\label{sec:dco}
As pointed out above, DCOs are the trivially ordered special case of Hofstra's 
BCOs. In the following (before Definition~\ref{def:discrete-genpred}) we recall 
basic definitions and results, which simplify considerably in the absence of 
ordering. We refer to Hofstra for proofs, but the arguments are straightforward 
and the readers are encouraged to reconstruct them themselves. 
Proposition~\ref{prop:dcogenpred} is new, the author is not aware of an 
analogous result for BCOs. Before introducing DCOs we establish some conventions 
concerning partial functions. 

\begin{conventions}[Partial functions and partial terms]
Throughout the rest of the article we perform calculations with partial 
functions both unapplied/compositionally, and applied/applicatively. 

In the unapplied form we view partial functions as represented by their graphs, 
which we compare via subset inclusion and compose like functions and relations.
The cartesian product of sets and functions extends to a tensor product on 
partial functions by setting
\[
 f\times g\;=\;\setof{(a,b,c,d)}{(a,c)\in f,(b,d)\in g} \;:\; A\times B\pto C\times D
\]
for $ f:A\pto C$ and $ g:B\pto D$. 
The pairing operation $\langle-,-\rangle$, given by
\[
\langle f, h\rangle =( f\times h)\circ\delta_A\qtext{for}  f:A\pto B \qtext{and}  h:A\pto C,
\]
satisfies the usual equations 
$
( k\times l)\circ\langle f, h\rangle = \langle k\circ f, l\circ h\rangle 
$
and 
$
\langle f, h\rangle\circ m=\langle f\circ m, h\circ m\rangle
$, as well as the inclusions $\pi_1\circ\langle f, h\rangle\subs f$ and 
$\pi_2\circ\langle f, h\rangle\subs h$, which become equalities whenever the 
\emph{eliminated} term is total.

When reasoning applicatively -- \ie with partially defined terms -- the 
statement $t\defined$ asserts that the term $t$ is defined, and $s=t$ states 
that both $s$ and $t$ are defined and equal. Equality of graphs is represented 
by the expression $s\cong t$ which is a shorthand for 
$s\defined\vee t\defined\imp s=t$, and inclusion of graphs is expressed as 
$s\les t$, which is a shorthand for $s\defined\imp s=t$.
\end{conventions}
\begin{definition}\label{def:dco}
\begin{enumerate}
\item \label{def:dco-dco}
A  \emph{discrete combinatory object} (DCO) is a pair 
$\daf$ where $A$ is a set and $\funa$ is a set of partial endofunctions on $A$
which contains $\id_A$, and such that for all $\alpha,\beta\in\funa$ there 
exists a $\gamma\in\funa$ with $\beta\circ\alpha\subs\gamma$.
\item A \emph{DCO morphism} from $\daf$ to $\dbg$ is a function
$f:A\to B$ such that for all $\alpha\in\funa$ there exists a $\beta\in\funb$
with $f\circ\alpha\subs\beta\circ f$.
\item For DCO morphisms $f,g:\daf\to\dbg$, we define
$f\leq g$ if there exists a $\beta\in \funb$ with $\beta\circ f = g$. In this 
case, we call $\beta$ a \emph{realizer} of the inequality $f\leq g$.
\end{enumerate}
\end{definition}
It is easy to see that DCO morphisms compose, and that the relation defined in 3 
is reflexive and transitive and preserved by composition on both sides. Thus, 
DCOs form a locally ordered category $\catdco$, which furthermore has a terminal 
object $1=(1,\{\id\})$ and binary 2-products given by
\begin{align*}
\daf\times \dbg&=(A\times B,\funa\tensor \funb)\\
\text{where }\quad 
\funa\tensor\funb&=
\setof{\alpha\times \beta}{\alpha\in \funa,\beta\in \funb}.
\end{align*}
Analogous statements for BCOs can be found in \cite[Section~2]{hofstra2006all}.

Given a DCO $\daf$ and a set $I$, we define a preorder on functions 
$\varphi,\psi:I\to A$ by setting $\varphi\leq\psi$ if there exists an 
$\alpha\in\funa$ with $\alpha\circ\varphi=\psi$. Again, we call $\alpha$ a 
\emph{realizer} of the inequality in this situation. The construction is
contravariantly functorial in $I$ and thus gives rise to a \emph{split indexed
  preorder}, \ie a contravariant functor 
\[\famf\daf:\setord,\qquad I\mapsto (A^I,\leq)\] 
from $\catset$ into the locally ordered 
category $\catord$ of preorders and monotone maps. 
We call $\famf\daf$ the
\emph{family fibration} of $\daf$.
The assignment $\daf\mapsto\famf\daf$ extends to a 2-functor
\[
\famf(-):\catdco\to\catiord
\]
into the locally ordered category $\catiord$ of indexed preorders and
pseudo-natural transformations. This 2-functor is a local
equivalence\footnote{I.e.\ the monotone map
  $\catdco(\daf,\dbg)\to\catiord(\famf\daf,\famf\dbg)$ is an equivalence of
  preorders for all DCOs $\daf,\dbg$.} -- as Hofstra shows for 
  BCOs~\cite[Proposition~3.1]{hofstra2006all} -- and is easily seen to
preserve finite 2-products.  Moreover, there is a
straightforward characterization of the essential image of $\famf(-)$, using the
following definition. 
\begin{definition}\label{def:discrete-genpred}
\NoEndMark
Let $\fifp:\setord$, $A\in\catset$, and $\mu\in\trip(A)$.
\begin{enumerate}
\item \label{def:discrete-genpred-d} $\mu$ is called
\emph{discrete}, if for any span $I\xepil{e}J\xrightarrow{f}A$ of functions
with $e$ surjective, and any $\varphi\in\fifp(I)$ such that
$e^*\varphi\leq f^*\mu$, there exists a (necessarily unique) $h:I\to A$
such that $he=f$ and $\varphi\leq h^*\mu$.
\item\label{def:discrete-genpred-g}
$\mu$ is called a \emph{generic predicate}, if for any 
set $I$ and any $\varphi\in\fifp(I)$ there exists a (not necessarily unique) 
function $f: I\to A$ with $\varphi\cong f^*\mu$.\hfill\definitionSymbol
\end{enumerate}
\end{definition}
\begin{proposition}\label{prop:dcogenpred}
An indexed preorder $\fifp$ is in the essential image of $\famf(-)$ if 
and only if it has a discrete generic predicate.
\end{proposition}
\begin{proof}
First, let $\daf$ be a DCO. The identity map $\id_A\in A^A$ is a generic 
predicate of $\famf\daf$ since every predicate $\varphi:I\to A$ can be 
represented as $\varphi^*(\id_A)=\id_A\circ\varphi$. 

To show that $\id_A$ is discrete, assume that $e^*(\varphi)\leq f^*(\id_A)$ for 
a span $I\xepil{e}J\xrightarrow{f}A$ with surjective $e$. By definition of the
order on $A^I$, there exists an $\alpha\in\funa$ with $\alpha\circ\varphi\circ e
= f$, and the required mediator $g:I\to A$ is given by $\alpha\circ\varphi$. 

Conversely, let $\fifp$ be an indexed preorder with discrete generic predicate
$\mu\in\fifp(A)$. Every partial function $\alpha\subs A\times A$ gives rise to 
a 
span $A\stackrel{d
}{\monol}\alpha\xrightarrow{a} A$, and the partial functions $\alpha$ satisfying
$d^*(\mu)\leq a^*(\mu)$ form a DCO structure $\funa$ on $A$. The assignment
$(f:I\to A)\mapsto f^*(\mu)$ defines an indexed monotone map
$\Phi:\famf\daf\to\fifp$, which is essentially surjective since $\fifp$ has a
generic predicate. To show that $\Phi$ is fiberwise order-reflecting, let 
$f, g:I\to A$ such that $f^*(\mu)\leq g^*(\mu)$ and consider the 
diagram
\[
\begin{tikzcd}
 &I\ar[dr,"g"]\ar[d,two heads,"e"] \ar[ld,"f"']\\
 A&U\ar[r,dashed,"h"]\ar[l,tail,"m"]& A
\end{tikzcd}
\]
where $m\circ e$ is a surjective/injective factorization of $f$.
Since $e^*(m^*\mu)\leq g^*\mu$ and $\mu$ is discrete, there exists an $h:U\to A$
with $he= g$ and $m^*\mu\leq h^*\mu$. The span $(m,h)$ constitutes a 
partial function in $\funa$ witnessing the inequality $f\leq g$ in $\famf\daf$.
\end{proof}

\begin{remark}[Saturation] Calling a DCO $\daf$ \emph{saturated} if $\funa$
	is a lower set in $P(A\times A)$, it is easy to see that every DCO is 
	isomorphic to a saturated one -- its \emph{saturation} -- obtained by 
	down-closing $\funa$ in $P(A\times A)$. While the second condition 
	in \hbox{Definition~\ref{def:dco}-\ref{def:dco-dco}} can be read as a weak 
	closure 
	under composition of $\funa$, it turns out that if $\daf$ is saturated then 
	$\funa$ is even closed under composition in the strong sense. The DCOs 
	constructed
	from indexed preorders in the proof of Proposition~\ref{prop:dcogenpred}
	are always saturated, but the DCOs defined from
	PCAs below in Definition~\ref{def:pca-dco} are not, and
	saturation is not preserved by the product operation given after 
	Definition~\ref{def:dco} (though a saturated product can of course be 
	obtained by down-closing).
\end{remark}

\subsection{Shallow and cartesian DCOs}\label{sec:shallow-cartesian}

We call an indexed preorder $\fifc:\setord$ \emph{shallow} if 
$\fifc(1)\simeq 1$, \ie the fiber over the singleton is equivalent to the 
terminal preorder. A \emph{DCO} $\daf$ is called shallow, if $\famf\daf$ is a 
shallow indexed preorder.

A \emph{cartesian DCO} is a cartesian object in the locally ordered category $\catdco$, i.e.\ a DCO $\daf$ 
such that the maps 
\[{!_A}:\daf\to 1 \qtext{and} \delta_A:\daf\to \daf\times \daf\]
have right adjoints
\[
\top:1\to\daf \qtext{and}\wedge:\daf\times\daf\to\daf.
\]
Since $\famf(-)$ is a local equivalence and preserves finite 2-products, $\daf$ is a cartesian DCO if and only if $\famf\daf$ is a \emph{cartesian indexed preorder} -- \ie a cartesian object in $\catiord$ -- which in turn
is equivalent to $\famf\daf$ having fiberwise
finite meets which are stable under reindexing. In particular, given a cartesian DCO $\daf$ and a set $I$, binary meets and a greatest element in $\famf\daf(I)=(A^I,\leq)$ are given by
\begin{align}
	\top_I&=(I\to 1\xrightarrow{\top}A)&&\text{and}\\
	\varphi\wedge_I\psi&=(I\xrightarrow{\langle\varphi,\psi\rangle}A\times A\xrightarrow{\wedge}A)&&\text{for }\varphi,\psi:I\to A.
\end{align}
%
The following lemma characterizes shallow cartesian DCOs.
\begin{lemma}\label{lem:shallow-cartesian-dco} 
A DCO  $\daf$ is shallow and cartesian if and only if
  \begin{enumerate}
  \item $A$ is inhabited and $\funa$ contains all constant functions $\comc_a$
    for $a\in A$, and
  \item there exists a function $\wedge:A\times A\to A$ and    
    $\lambda,\rho\in\funa$ such that
\begin{enumerate}
\item 
for all $a,b\in A$ we have $\lambda(a\wedge b)=a$ and $\rho(a\wedge b)=b$, and
\item 
for all $\alpha,\beta\in\funa$ there exists a $\gamma\in\funa$ with 
$\wedge\circ\langle\alpha,\beta\rangle\subs\gamma$.
\end{enumerate}
  \end{enumerate}
\end{lemma}
\begin{proof}
  Condition 1 is easily been seen to be equivalent to $\daf$ being shallow and
  $\daf\to 1$ having a right adjoint. In the following we show that 2 is
  equivalent to the existence of a right adjoint to
  $\delta_A:\daf\to \daf\times \daf$.
  
Given a right adjoint $\wedge$ to $\delta_A$, we take $\lambda$ and $\rho$ to be
realizers of $\wedge\leq\pi_1$ and $\wedge\leq\pi_2$, respectively, so that
(a) is satisfied.
Given $\alpha,\beta\in\funa$, let $U\subs A$ be the intersection of the domains
of $\alpha$ and $\beta$ (which is precisely the domain of
$\langle\alpha, \beta\rangle$), and let $\iota,\varphi,\psi:U\to A$ be
respectively the inclusion and the restrictions of $\alpha$ and $\beta$ to
$U$. Then $\alpha$ and $\beta$ realize $\iota\leq\varphi$ and $\iota\leq\psi$,
and for $\gamma$ a realizer of $\iota\leq\varphi\wedge\psi$ we have
$\gamma\circ\iota=\varphi\wedge\psi$. The claim follows since
$\gamma\circ\iota\subs\gamma$ and
$\varphi\wedge\psi=\wedge\circ\langle\alpha,\beta\rangle$.


Conversely assume that (a) and (b) hold. We show that for any set $I$ and
$\varphi,\psi:I\to A$ the function $\wedge\circ\langle\varphi,\psi\rangle$ is a
meet of $\varphi$ and $\psi$ in $\famf\daf(I)$.  The partial functions $\lambda$
and $\rho$ realize $\wedge\circ\langle\varphi,\psi\rangle\leq\varphi$ and
$\wedge\circ\langle\varphi,\psi\rangle\leq\psi$. Let $\theta : I\to A$ and let
$\alpha$ and $\beta$ be realizers of $\theta\leq\varphi$ and
$\theta\leq\psi$. Let $\gamma\in\funa$ such that
$\gamma\sups\wedge\circ\langle\alpha,\beta\rangle$. Precomposing with $\theta$
on both sides gives
$\gamma\circ\theta\sups\wedge\circ\langle\alpha\circ\theta,
\beta\circ\theta\rangle=\wedge\circ\langle\varphi, \psi\rangle$ and since the
right hand side is total, the two are equal.
\end{proof}
An easy consequence of the lemma is that every non-trivial cartesian DCO is infinite, since $\wedge$ and $\langle\lambda,\rho\rangle$ exhibit $A\times A$ as a retract of $A$.

\subsection{PCAs and functionally complete DCOs}

\begin{definition}
	\begin{enumerate}
		\item 
		A \emph{partial applicative structure (PAS)} is a set $\pcaa$ with a partial 
		binary
		operation \[(-\,\ap\,-):\pcaa\times\pcaa\to\pcaa\]
		called \emph{application}. 
		A \emph{polynomial} over $\pcaa$ is a term $t[\xotn]$
		built up from variables $\xotn$, constants in $\pcaa$, and application. 
		\item
		A \emph{partial combinatory algebra (PCA)} is a PAS $\pcaa$
		such that for every polynomial $t[x_1,\dots,x_{n+1}]$ there exists an
		$e\in \pcaa$ such that
		\begin{equation*}
		e\ap a_1\ap\dots\ap a_{n}\defined\qqtext{and}
		t[a_1,\dots,a_{n+1}]\les e\ap a_1\ap\dots\ap a_{n+1}
		\end{equation*}
		for all $a_1,\dots,a_{n+1}\in \pcaa$.
	\end{enumerate}
\end{definition}
It follows from the definition that every PCA $\pcad$ contains elements $\comk,\comp,\compz,\compo$
satisfying $\comk\ap a\ap b=a$, $\compz\ap(\comp\ap a\ap b) = a$, and
$\compo\ap(\comp\ap a\ap b)=b$ for all $a,b\in\pcaa$ (in particular,
$\comk \ap a$ and $\comp\ap a \ap b$ are always defined,
see~\cite[Section~1.1]{vanoosten2008realizability}).

As mentioned in the introduction, the above definition of PCA is more general 
than the traditional one; the latter is obtained by replacing the symbol $\les$ 
by $\simeq$.

\begin{definition}\label{def:pca-dco}
	Let $\pcad$ be a PCA. The set $\pzf_\pcaa$ of \emph{computable functions} 
	over $\pcaa$ is the set of partial functions
	\[
	\phi_a:\pcaa\pto\pcaa\qtext{defined by} \phi_a(b)\simeq a\ap b
	\]
	for $a\in\pcaa$.
	The pair $(\pcaa,\pzf_\pcaa)$ is a DCO which we call the \emph{DCO induced by $\pcad$}.
\end{definition}
\begin{lemma}\label{lem:pca-shallow-cartesian}
	The DCO $(\pcaa,\pzf_\pcaa)$ is shallow and cartesian for every PCA $\pcad$.
\end{lemma}
\begin{proof}
	We verify the conditions of Lemma~\ref{lem:shallow-cartesian-dco}. We have
	already seen that $\pcaa$ is inhabited, and constant functions are computable
	using $\comk$. The partial functions $\lambda$ and $\rho$ are given by
	$\phi_\compz$ and $\phi_\compo$, and $\wedge$ is given by
	$a\wedge b=\comp\ap a\ap b$. It remains to show that for all $a,b\in\pcaa$
	there exists a $c\in\pcaa$ with
	$\wedge\circ\langle\phi_a,\phi_b\rangle\subs\phi_c$,
	\ie $\comp\ap(a\ap x)\ap(b\ap x)\les c\ap x$ for all $x\in\pcaa$. Such a $c$
	exists by the definition of PCA.
\end{proof}
To characterize the shallow cartesian DCOs that arise from PCAs, we introduce
the following concept
(adapted from~\cite[Proposition~6.3~(ii)]{hofstra2006all}).
\begin{definition}\label{def:funcompl}
	A cartesian DCO $\daf$ is called \emph{functionally complete}, if there 
	exists an $@\in \funa$ (called the \emph{universal function}) such that for 
	every $\alpha\in \funa$ there exists a \emph{total } $\widetilde{\alpha}\in 
	\funa$ satisfying 
	\begin{equation}\label{eq:def-funcompl}
	\alpha(a\wedge b)\les @(\widetilde{\alpha}(a)\wedge b)
	\end{equation}
	for all $a,b\in A$.
\end{definition}
\begin{remark}
The functional completeness condition above resembles the notion of
\emph{weak partial evaluation} in \cite[Definition~3]{robinson2001abstract}, and
condition (ii) in~\cite[Proposition~6.3]{hofstra2006all}. The `weak closure'
condition in~\cite[Definition~2.3]{birkedal2000general} is stronger, since it
replaces the inclusion~\eqref{eq:def-funcompl} of partial functions by an 
equality.
\end{remark}
\begin{lemma}\label{lem:pca-funcompl}
	DCOs $(\pcaa,\pzf_\pcaa)$ induced by PCAs are functionally complete.
\end{lemma}
\begin{proof}
	Since $\pcaa$ is a PCA
	there exists an 
	$e\in\pcaa$ with $(\compz\ap a)\ap(\compo \ap a)\les e\ap a$ for 
	$a\in\pcaa$, which implies $a\ap b\les e\ap(\comp\ap a\ap b)$ for $a,b\in\pcaa$.
	We define the universal function by $@=\phi_e$. Now given $r\in\pcaa$, 
	there exists a $\widetilde{r}\in\pcaa$
	with $\widetilde{r}\ap a\defined$ and $r\ap(\comp\ap a\ap b)\les 
	\widetilde{r}\ap a\ap b$ for $a,b\in\pcaa$.
	This implies $\phi_r(a\wedge b)\les@(\phi_{\widetilde{r}}(a)\wedge b)$.
\end{proof}
\begin{example}
	The set of primitive recursive functions constitutes a shallow cartesian DCO
	structure on $\N$ which is \emph{not} functionally complete -- the existence
	of a universal primitive recursive function would lead to a contradiction by
	diagonalization.
\end{example}
Any functionally complete DCO $\daf$ gives rise to a
PAS $(A,\cdot)$ where the partial application is given by
\begin{equation}\label{eq:ap-from-fc}
a\ap b \simeq @(a\wedge b)\qquad\text{for }a,b\in A.
\end{equation}
In the following lemma we show that this gives a PCA if $\daf$ is shallow.
\begin{lemma}\label{lem:dco-pca}
	Let $\daf$ be cartesian, shallow, and functionally complete.
	\begin{enumerate}
		\item\label{lem:dco-pca-a} 
		For any polynomial $t[x_1,\dots,x_{n}]$ over the induced PAS $(A,\cdot)$
		there exists an $\alpha\in \funa$ with
		\[
		t[a_1,\dots,a_n]\les \alpha(\top\wedge a_1\wedge\dots\wedge a_n)
		\]
		for $a_1,\dots,a_n\in A$ (by convention $\wedge$ associates to the left).
		\item\label{lem:dco-pca-b} 
		For any $\alpha\in\funa$ and $n > 0$ there exists an $e\in A$ such that
		\[
		e\ap a_1\ap\dots\ap a_{n-1}\defined \qtext{and}
		\alpha(\top\wedge a_1\wedge\dots\wedge a_n)\les e\ap a_1\ap\dots\ap a_n
		\]
		for $a_1,\dots,a_n\in A$.
		\item\label{lem:dco-pca-c} $(A,\ap)$ is a PCA, and $\id_A$ constitutes 
		an isomorphism between $\daf$ and the induced DCO.
	\end{enumerate}
\end{lemma}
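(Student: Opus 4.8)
The plan is to prove the three assertions in order, using part~\ref{lem:dco-pca-a} and part~\ref{lem:dco-pca-b} as the technical engine that drives part~\ref{lem:dco-pca-c} through combinatory completeness (Lemma~\ref{lem:comcom}). Throughout I would exploit that $a\ap b\simeq@(a\wedge b)$, that $@,\lambda,\rho\in\funa$, that $\const_a\in\funa$ for every $a\in A$ by shallowness, and that $\funa$ is closed, up to containment, both under composition and under $(\alpha,\beta)\mapsto\wedge\circ\langle\alpha,\beta\rangle$ (Lemma~\ref{lem:cartesian-dco}); I would also record once that $u\les v$ implies $@(u)\les@(v)$. For part~\ref{lem:dco-pca-a} I would induct on the structure of $t$. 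If $t=x_j$, the value $a_j$ is extracted from the left-nested term $\top\wedge a_1\wedge\dots\wedge a_n$ by applying $\lambda$ some $n-j$ times followed by $\rho$, and composition-closure of $\funa$ yields the required $\alpha$. If $t$ is a constant $c$, take $\alpha=\const_c$. If $t=s_1\ap s_2$, the induction hypothesis gives $\beta_1,\beta_2\in\funa$ with $s_i[\vec a]\les\beta_i(\top\wedge a_1\wedge\dots\wedge a_n)$; meet-closure supplies $\gamma\in\funa$ with $s_1[\vec a]\wedge s_2[\vec a]\les\gamma(\top\wedge a_1\wedge\dots\wedge a_n)$, and applying $@$ to both sides and using composition-closure on $@\circ\gamma$ finishes the step.

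For part~\ref{lem:dco-pca-b} I would induct on $n\geq1$. For $n=1$, functional completeness supplies a \emph{total} $\widetilde\alpha\in\funa$ with $\alpha(\top\wedge a_1)\les@(\widetilde\alpha(\top)\wedge a_1)$, so $e:=\widetilde\alpha(\top)$ works: it is an element of $A$ (so the empty iterated application is defined) and $e\ap a_1=@(e\wedge a_1)\ges\alpha(\top\wedge a_1)$. For $n\to n+1$ I would write $\top\wedge a_1\wedge\dots\wedge a_{n+1}=(\top\wedge a_1\wedge\dots\wedge a_n)\wedge a_{n+1}$ so that functional completeness gives a total $\widetilde\alpha$ with
\[
\alpha(\top\wedge a_1\wedge\dots\wedge a_{n+1})\les@\bigl(\widetilde\alpha(\top\wedge a_1\wedge\dots\wedge a_n)\wedge a_{n+1}\bigr),
\]
apply the induction hypothesis to $\widetilde\alpha$ to get $e$ with $e\ap a_1\ap\dots\ap a_{n-1}$ defined and $\widetilde\alpha(\top\wedge a_1\wedge\dots\wedge a_n)\les e\ap a_1\ap\dots\ap a_n$; since $\widetilde\alpha$ is total the left-hand side is always defined, so $e\ap a_1\ap\dots\ap a_n$ is defined and equal to it, and then $e\ap a_1\ap\dots\ap a_{n+1}=@\bigl((e\ap a_1\ap\dots\ap a_n)\wedge a_{n+1}\bigr)\ges\alpha(\top\wedge a_1\wedge\dots\wedge a_{n+1})$.

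For part~\ref{lem:dco-pca-c}: given a polynomial $t[x_1,\dots,x_{n+1}]$ over $(A,\ap)$, part~\ref{lem:dco-pca-a} gives $\alpha\in\funa$ with $t[\vec a]\les\alpha(\top\wedge a_1\wedge\dots\wedge a_{n+1})$, and part~\ref{lem:dco-pca-b} applied with $n+1$ in place of $n$ gives $e$ with $e\ap a_1\ap\dots\ap a_n$ defined and $\alpha(\top\wedge a_1\wedge\dots\wedge a_{n+1})\les e\ap a_1\ap\dots\ap a_{n+1}$; hence $(A,\ap)$ satisfies the hypothesis of Lemma~\ref{lem:comcom} and is a PCA. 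For the stated isomorphism I would verify that $\id_A$ is a monotone map of DCOs both from $\daf$ to the induced DCO $(A,\setof{\phi_a}{a\in A})$ of $(A,\ap)$ (where $\phi_a(b)\simeq a\ap b$) and in the reverse direction; being monotone both ways, $\id_A$ is its own two-sided inverse in $\catdco$. In the first direction, given $\alpha\in\funa$, composition-closure gives $\beta\in\funa$ with $\alpha\circ\rho\subs\beta$, whence $\alpha(b)\les\beta(\top\wedge b)$ because $\rho(\top\wedge b)=b$; part~\ref{lem:dco-pca-b} with $n=1$ then gives $e$ with $\alpha(b)\les e\ap b=\phi_e(b)$, i.e.\ $\alpha\subs\phi_e$. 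In the reverse direction, given $\phi_a$, meet-closure applied to $\const_a$ and $\id_A$ gives $\gamma\in\funa$ with $a\wedge b\les\gamma(b)$, and composition-closure applied to $@\circ\gamma$ gives $\alpha\in\funa$ with $\phi_a(b)=@(a\wedge b)\les\alpha(b)$, i.e.\ $\phi_a\subs\alpha$.

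I expect part~\ref{lem:dco-pca-b} to be the main obstacle: one has to curry a single function of $\funa$ into a combinator so that every intermediate partial application $e\ap a_1\ap\dots\ap a_k$ is \emph{defined}, and the only thing that makes this possible is the totality of the auxiliary functions $\widetilde\alpha$ furnished by functional completeness. Keeping the definedness bookkeeping straight through the induction, rather than just the ``$\les$'' inequalities, is the genuinely delicate point; once part~\ref{lem:dco-pca-b} is available, parts~\ref{lem:dco-pca-a} and~\ref{lem:dco-pca-c} are routine.
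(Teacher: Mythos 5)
Your proposal is correct and follows essentially the same route as the paper: structural induction for part 1 with the identical case analysis ($\rho\circ\lambda^{n-j}$, $\const_c$, and $@\circ\gamma$ via meet- and composition-closure), repeated use of the totality of $\widetilde{\alpha}$ for part 2 (your induction on $n$ is just a reformulation of the paper's explicit iteration $\alpha_0,\dots,\alpha_n$ with $e=\alpha_n(\top)$), and part 3 via Lemma~\ref{lem:comcom} together with mutual containment of $\funa$ and $\{\phi_a\}$. The only cosmetic differences are that in part 3 the paper exhibits $\phi_e$ as literally equal to $@\circ\tatu{\const_e}{\id}\in\funa$ where you settle for a containment $\phi_a\subs\alpha$ (which suffices for $\id_A$ to be monotone both ways), and your use of $\beta\sups\alpha\circ\rho$ before currying is a slightly more careful version of the paper's direct appeal to functional completeness.
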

\begin{proof}
	The first claim is shown by induction on the structure of 
	$t[x_1,\dots,x_{n}]$.

	If $t[x_1,\dots,x_n]\equiv x_i$ then $\alpha=\rho\circ\lambda^{n-i}$.
	If $t[x_1,\dots,x_n]\equiv a$ for $a\in\pcaa$, then $\alpha$ is given
	by the constant function $\const_a$.
	
	If $t[x_1,\dots,x_n]\equiv u[x_1,\dots,x_n]\ap v[x_1,\dots,x_n]$, then by
	assumption there exist $\alpha,\beta\in \funa$ such that 
	$\alpha(a^*)\ges u[a_1,\dots,a_n]$ and $\beta(a^*)\ges v[a_1,\dots a_n]$ for
	$a_1,\dots,a_n\in A$ and $a^*=\top\wedge a_1\wedge\dots\wedge a_n$.  
	By Lemma~\ref{lem:shallow-cartesian-dco} there exists a $\gamma\in\funa$ 
	such that $\gamma\sups\wedge\circ\tatu{\alpha}{\beta}$, and the calculation
	\begin{equation*}
	(@\circ\gamma)(a^*)
	\;\ges\; @(\wedge(\tatu{\alpha}{\beta}(a^*))
	\;\cong\; \alpha(a^*)\ap\beta(a^*)\\ 
	\;\ges\; u[a_1,\dots,a_n]\ap v[a_1,\dots,a_n]
	\end{equation*}
	shows that $@\circ\gamma$ has the required property.
	
	For the second claim set $\alpha_0=\alpha$ and 
	$\alpha_{i+1}=\widetilde{\alpha_i}$ for $i\leq n$.
	Then $\alpha_i$ is total for $i > 0$, and we have\[\alpha_{i+1}( \top\wedge 
	a_1\wedge\dots\wedge a_{n-i-1})\ap a_{n-i} = 
	\alpha_i(\top\wedge a_1\wedge\dots\wedge a_{n-i})\] for $0<i\leq n$, and 
	\[\alpha_{1}( \top\wedge a_1\wedge\dots\wedge a_{n-1})\ap a_{n} \ges
	\alpha(\top\wedge a_1\wedge\dots\wedge a_n).\] With $e=\alpha_n(\top)$ the 
	claim follows by iterating.
	
	Claims 1 and 2 together imply that $(A,\cdot)$ is a PCA.  To show that the
	induced DCO structure is isomorphic to $\daf$, we show that for every
	$\alpha\in\funa$ there exists an $e\in\pcaa$ with $\alpha\subs\phi_e$ and 
	vice versa. For $\alpha\in\funa$ and $a\in A$ we have
	\[
	\alpha(a)\simeq (\alpha\rho)(\top\wedge a)\les 
	@((\widetilde{\alpha\!\circ\!\rho})(\top)\wedge a)\simeq 
	(\widetilde{\alpha\!\circ\!\rho})(\top)\ap a,
	\]
	thus the required element is given by $(\widetilde{\alpha\!\circ\!\rho})(\top)$.
	Conversely, $\phi_e\in\funa$ for any $e\in A$ since it can be represented as
	$@\circ\tatu{\const_e}{\id}$.
\end{proof}
\begin{corollary}\label{cor:char-pca}
	Up to isomorphism, the DCOs induced by PCAs are characterized by the fact that they are cartesian, shallow, and functionally complete.
\end{corollary}
\begin{proof}
	Lemmas~\ref{lem:pca-shallow-cartesian} and \ref{lem:pca-funcompl} say that the DCOs induced by PCAs are cartesian, shallow and functionally complete. Conversely,  Lemma~\ref{lem:dco-pca} establishes that any DCO having the three properties arises up to isomorphism from a PCA structure on the same carrier set.
\end{proof} 
\begin{remark}
	As shown in~\cite[Corollary~4.10.7]{frey2013fibrational}, dropping the shallowness condition in the corollary yields a characterization of DCOs induced by \emph{inclusions of PCAs}, analogous to Hofstra's characterization of filtered OPCAs among BCOs~\cite[Proposition~6.6]{hofstra2006all}.
\end{remark}

\section{Partitioned assemblies and local categories}\label{sec:local-categories}

\begin{definition}
  \begin{enumerate}
  \item The \emph{total category} $\gcons\fifp$ of an indexed preorder $\fifp$ 
  has pairs
$(I\in\catset, \varphi\in\fifp(I))$ as objects, and functions $f:I\to J$
satisfying $\varphi\leq f^*\psi$ as morphisms from $(I,\varphi)$ to
$(J,\psi)$.
\item The category $\pasm\daf$ of \emph{partitioned assemblies} over a DCO
  $\daf$ is the total category $\gcons\famf\daf$ of its family
  fibration.
 \end{enumerate}
\end{definition}
This definition of partitioned assemblies generalizes the traditional one in that if $(\pcaa,\pzf_\pcaa)$ is a DCO induced by a PCA, then $\pasm(\pcaa,\pzf_\pcaa)$ is \emph{equal} to the category of partitioned assemblies over $\pcad$ as defined \eg in~\cite[Definition~2.8]{menni2002more} (the original definition \cite[before Proposition~2]{carboni1988categorical} is slightly different, but is easily seen to be equivalent).

The total category $\gcons\fifc$ of a \emph{cartesian} indexed preorder $\fifc$ has finite limits: binary products and a 
terminal object are given by
\begin{equation}
  (I,\varphi)\times(J,\psi) = (I\times J,\pi_1^*\varphi\wedge\pi_2^*\psi)
\qqtext{and}    1 = (1,\top),
\end{equation}
and an equalizer of $f,g:(I,\varphi)\to(J,\psi)$ is given by
$m:(U,m^*\varphi)\mono(I,\varphi)$ where $m:U\mono I$ is the equalizer of $f$
and $g$ in $\catset$\footnote{This is an instance  of the fact that the total category of a finite-limit fibration over a finite-limit base has finite limits~\cite[Theorem~8.5]{streicherfib}}.

Furthermore there is an adjunction
\begin{equation}
  \abs{-}\;\adj\;\nabla:\catset\to\gcons\fifc\qqtext{given by}\abs{(I,\varphi)}=I\qtext{and}\nabla(I)=(I,\top).
\end{equation}
The left adjoint $\abs{-}$ is obviously faithful, and if $\fifc$ is shallow
it coincides with the \emph{global sections functor}
\[\Gamma \speq (\gcons\fifc)(1,-) \;:\; \gcons\fifc\to\catset,\]
which makes $\gcons\fifc$ a \emph{well-pointed
  local category}:
\begin{definition}\label{def:local}
  A \emph{local category} is a locally small finite-limit category $\catc$ whose
  global sections functor
has a right adjoint $\nabla$. If $\Gamma$ is faithful, we call $\catc$ 
\emph{well-pointed local}.
\end{definition}
The adjunction $\Gamma\adj\nabla$ is a
reflection for every local category $\catc$, since
\[
  \Gamma\nabla I = \catc(1,\nabla I)\cong\catset(\Gamma 
  1,I)\cong\catset(1,I)\cong I
\]
for all sets $I$. Since both $\nabla$ and $\Gamma$ preserve
limits, $\Gamma\adj\nabla$ is in fact a \emph{localization} (finite-limit
preserving reflection), whence we can make sense of the sheaf theoretic terms
\emph{dense}, \emph{closed}, and \emph{separated}:
\begin{definition}
  Let $\catc$ be a local category.
  \begin{enumerate}
  \item 
    An arrow $f:B\to A$ in $\catc$ is called
    \begin{itemize}
    \item 
    \emph{dense} if
    $\Gamma f$ is bijective, and
    \item\emph{closed} if
$
\begin{tikzcd}[column sep = large, row sep = small]
  B \ar[d,"\eta_{B}"']\ar[r,"f"'] & A\ar[d,"\eta_{A}"]\\
\nabla\Gamma B\ar[r,"\nabla \Gamma f"]& \nabla\Gamma A
\end{tikzcd}
$
is a pullback.
    \end{itemize}
\item
  An object 
  $G\in\catc$ is called
  \begin{itemize}
\item \emph{separated}, if $\eta_G:G\to\nabla\Gamma G$ is monic,
  \item 
  \emph{generic}, if for every $C\in\catc$
  there exists a closed $f:C\to G$, and
\item
  \emph{discrete}, if it is right
orthogonal to all closed maps over surjections, \ie if for any span
$C\xleftarrow{e}D\xrightarrow{f} G$ with $e$ closed and $\Gamma e$ surjective
there exists a unique $g:C\to G$ with $ge=f$.
  \end{itemize}
  \end{enumerate}
\end{definition}
\begin{remarks}\label{rem:local}
	\begin{enumerate}
\item It is well known~\cite{cassidy1985reflective} that for every localization, the dense and closed maps form a \emph{stable reflective factorization system}, in particular the dense maps are stable under pullback and satisfy 3-for-2.
\item\label{rem:local-gendisc}
A morphism $f: (I,\varphi)\to(J,\psi)$ in the total category of a shallow cartesian 
indexed preorder $\fifc$ is dense if and only if $f$ is a bijection, and closed 
precisely if $\varphi\cong f^*\psi$. The object $(I,\varphi)$ is
respectively generic or discrete in $\gcons\fifc$ precisely if $\varphi$ is 
so as a predicate in $\fifc$.
\item 
As a left adjoint, $\Gamma$ preserves epimorphisms. Conversely, if
$f:B\to A$ is closed and $\Gamma f$ is surjective then by the axiom of
choice it has a section, which implies that $f$ is split epic since split
epimorphisms are stable under arbitrary functors and pullbacks. Thus in
presence of choice, discrete objects are precisely those which are right
orthogonal to closed epis.  In well-pointed local categories this is even
true without choice, since faithful functors reflect epis.
\item\label{item:menni} 
Local categories are instances of the notion of \emph{chaotic situation} introduced by Menni in~\cite{menni2002more}, from where we also adapted the concept of `generic object'. Menni's article is closely related to the present work in that it is concerned with the relationship between partitioned assemblies and realizability toposes, elaborating on prior (but later published) work~\cite{menni2003characterization} by the same author.
	\end{enumerate}
	
\end{remarks}
\begin{lemma}\label{lem:char-wp} 
  The following are equivalent for a local category $\catc$.
  \begin{enumerate}
  \item $\catc$ is well pointed.
  \item All dense maps in $\catc$ are monos.
  \item All objects are separated.
    \item (If $\catc$ has a generic object $G$) $G$ is separated.
  \end{enumerate}
\end{lemma}
  \begin{proof}
    1 implies 2 since faithful functors reflect monomorphisms, and 2 implies
    3 since the maps $\eta_A:A\to \nabla\Gamma A$ are dense. 3 and 1
	are equivalent since the unit of an adjunction is componentwise monic if and 
	only if the left adjoint is faithful~\cite[Theorem~IV.3-1]{maclanecwm}. 
	Finally, if $\catc$ has a separated generic object then all objects are 
	separated since monos are stable under pullback.
  \end{proof}
\newcommand{\gfc}{\gcons\fifc_\catc}
\begin{proposition}\label{prop:pasm-char}
  A category $\catc$ is equivalent to 
  partitioned
  assemblies over a shallow cartesian DCO precisely if it is
  well-pointed local and has a discrete generic object $G$.
  
\end{proposition}
\begin{proof}
  Given a shallow cartesian DCO $\daf$, we have seen that
  $\famf\daf$ is shallow and cartesian and that $\mu\in\famf\daf(A)$ is discrete
  and generic. Then the total category $\gcons\famf\daf$ is well-pointed local
  by the remarks before Definition~\ref{def:local}, and $(A,\mu)$ is a discrete
  generic object by Remark~\ref{rem:local}-\ref{rem:local-gendisc}.
    
  Conversely, assume that $\catc$ is well-pointed local with discrete generic
  object $G$. By Proposition~\ref{prop:dcogenpred},
  Remark~\ref{rem:local}-\ref{rem:local-gendisc}, and since $\famf(-)$ reflects
  shallowness and cartesianness, it is sufficient to exhibit a shallow and
  cartesian indexed preorder $\fifc_\catc$ such that $\gfc\simeq\catc$.
  Elements of $\fifc_\catc(I)$ are dense maps $U\mono\nabla I$, ordered by
  inclusion, and reindexing along $f$ is given by pullback along $\nabla f$.  It
  is clear that all $\fifc_\catc(I)$ have -- and all $f^*$ preserve -- finite
  meets, so to conclude that $\fifc_\catc$ is a cartesian indexed preorder it remains
  to verify that all $\fifc_\catc(I)$ are (essentially) small. To this end we
  embed $\fifc_\catc(I)$ into $\catset(I,\Gamma G)$ by sending every dense
  $U\stackrel{u}{\mono}A$ to
  \[\chi_u=(I\xrightarrow[\cong]{\ve_I\inv}\Gamma\nabla 
  I\xrightarrow[\cong]{(\Gamma
    u)\inv}\Gamma U\xrightarrow{\Gamma c} \Gamma G),\] where $c:U\to G$ is
  closed.
In the commutative diagram
\[
\begin{tikzcd}[column sep = large]
   &
U \ar[rr,"c"]
\ar[dr,"\eta_U",tail]
\ar[ld,"u"',tail]
&&
G\ar[d,"\eta_G",tail]
\\
\nabla I\ar{r}[]{\eta_{\nabla I}}[']{=\nabla(\ve_I\inv)}
&\nabla\Gamma\nabla I\ar[r,"(\nabla\Gamma u)\inv"']
&\nabla\Gamma U\ar[r,"\nabla\Gamma c"']
&\nabla\Gamma G
\end{tikzcd}
\]
the outer trapezoid is a pullback since the inner right one is and the two
lower left maps are isomorphisms. This shows that
$u\cong (\nabla\chi_u)^*\eta_G$, \ie we can reconstruct $u$ up to isomorphism
from $\chi_u$.

To see that $\fifc_\catc$ is shallow let $U\mono 1$ be dense. Then $\Gamma
U=1$, \ie $U$ has a point which implies $1\cong U$.

Finally we have $\gcons\fifc_\catc\simeq\catc$ since the assignments
$C\mapsto(\Gamma C,\eta_C)$ and $(I, U\mono \nabla I)\mapsto U$ extend to
functors $J:\catc\to\gfc$ and $K:\gfc\to\catc$ satisfying $KJ=\id_\catc$ and
$JK\cong\id_{\gfc}$.
\end{proof}

Recall from~\cite[Remark~3.2]{carboni2000locally} that a finite-limit category $\catc$ is called \emph{weakly locally cartesian closed} (w.l.c.c.) if for all arrows $b:B\to J$ and $ u:J\to I$ the presheaf
\[
(\catc/J)(u^*-,b)\;:\; (\catc/I)\op\to\catset
\]
is weakly representable (\ie covered by a representable presheaf). In this case 
we call any weakly representing object in $\catc/I$ a \emph{weak dependent 
product of $b$ along $u$}. A \emph{weak exponential} of $B,C\in\catc$ is an object which weakly represents the presheaf $\catc(-\times B,C)$. W.l.c.c.\ categories do in particular have weak exponentials, and so do all their slices.

The following result is similar to~\cite[Corollary~2]{robinson2001abstract}\footnote{
See also the longer draft version~\cite{demarchi2000abstract} containing full 
proofs.}.
\begin{proposition}\label{prop:wlccc-funcompl}
  A cartesian DCO $\daf$ is functionally complete if and only if 
  the category $\pasm\daf$ is w.l.c.c.
\end{proposition}
\begin{proof}
  Assume first that $\daf$ is functionally  complete. A weak dependent product of $b:(B,\psi)\to(J,\varphi)$ along
  $u:(J,\varphi)\to (I,\iota)$ is given by $p:(K,\theta)\to(I,\iota)$, where
  \begin{align}
K &=\setof{(i\in I,\,f\in\textstyle\prod_{j\in J_i}B_j,\, a\in A)}{\forall j\in J_i\qdot @(a\wedge \varphi j)=\psi(fj)}\\
\theta(i,f,a)&=\iota(i)\wedge a\quad\\
\text{and}\quad p(i,f,a)&=i.
\end{align}

Conversely assume that $\pasm\daf$ is w.l.c.c.,
let $E=\setof{(\alpha,a,b)}{\alpha:A\pto A,f(a)=b}$, and let $F=(A\pto A)$ be 
the set of partial endofunctions on $A$. Define 
$\varphi,\psi: E\to A$ by $\varphi{(\alpha,a,b)}=a$
and $\psi{(\alpha,a,b)}=b$, respectively. 
Let $e_1:(E,\varphi)\to \nabla F$
and $e_2:(E,\psi)\to \nabla F$ be defined by 
$e_1(\alpha,a,b)=e_2(\alpha,a,b)=\alpha$. Let $f_X:(X,\xi)\to\nabla F$ 
together with $\ve:(X,\xi)\times_{\nabla F} (E,\varphi)\to (E,\psi)$ be a weak
exponential of $e_1$ and $e_2$ in 
$\famf\daf/\nabla F$, where
\begin{align}
  (X,\xi)\times_{\nabla F} (E,\varphi)&=(X\times_F E,\xi\wedge_F\varphi)\quad  
  \text{with}\\
(\xi\wedge_F\varphi)(x,e)&=\xi(x)\wedge\varphi(e).
  \end{align}
Let $@$ be a realizer of $\ve$, so that
$\psi\circ\ve=@\circ(\xi\wedge_F\varphi)$.
To show that $@$ is a universal function let $\alpha\in {\funa}$
and define
$f:(A,\id)\to\nabla F$ by $f(a)=\alpha(a\wedge-)$. A pullback of $f$
and $e_1$ is given by 
\begin{equation*}
\vcenter{\xymatrix{
(S,\theta)\pullbackcorner \ar[r]_k\ar[d]_h & (E,\varphi)\ar[d]^{e_1}\\
(A,\id)\ar[r]^{f}& \nabla F
}}
\quad
\begin{array}{r@{\;}l@{\qquad}l@{\;}l}
{S} = \{(a,b,c)&\msep \alpha(a\wedge b)=c\}\\
\theta{(a,b,c)} &= a\wedge b\\
h(a,b,c) &= a&\\
k(a,b,c) &= (f(a),b,c)&
\end{array}
\end{equation*}
where $h$ and $k$ are realized by $\lambda$ and $\rho$, respectively.  Define
$g:(S,\theta)\to (E,\psi)$ with the same underlying function as $k$.  Then $g$
is realized by $\alpha$. Let $\tilde g:(A,\id)\to (X,\xi)$ be a weak
exponential transpose, and let $\widetilde{\alpha}$ be a realizer of
$\tilde g$. The relevant data is summarized in the following diagram over 
$\nabla F$
\[\vcenter{
\xymatrix@R-2mm@C+3mm{
(A,\id)
  \ar[r]^{\tilde g}_{\widetilde{\alpha}} & 
(X,\xi)\\
(S,\theta)
  \ar[u]^{h}_\lambda
  \ar[r]^-{\tilde g\times_F\id}
  \ar[d]_k^\rho & 
(X\!\times_F\! E,\,\xi\!\wedge_F\!\varphi)
  \ar[u]^{\pi_1}_\lambda
  \ar[r]^-\ve_-{\att}
  \ar[d]_{\pi_2}^\rho 
& 
(E,\psi) \\
(E,\varphi) \ar[r]^\id &
(E,\varphi)
}},
\]
where the left/upper labels of arrows are functions, and the right/lower labels
are their realizers. We have
\begin{align*}
\psi\circ g & = \psi\circ\ve\circ(\tilde g\times_F\id)\\
& = @\circ(\xi\wedge_F\varphi)\circ(\tilde g\times_F\id)\\
& = @\circ\wedge\circ\langle\xi\circ\tilde g\circ h,\varphi\circ k\rangle
\end{align*}
and hence
\[
c 
= \psi(g(a,b,c)) 
= @((\xi\circ\tilde g\circ h)(a,b,c)\wedge(\varphi\circ k)(a,b,c))
= @(\widetilde{\alpha}(a)\wedge b)
\]
for $(a,b,c)\in S$, which means precisely that 
$\alpha(a\wedge b)\les @(\widetilde{\alpha}(a)\wedge b)$ for $a,b\in A$. 
\end{proof}
Combining the previous results, we get the following.
\begin{theorem}\label{thm:char-pasm}
  A category is equivalent to partitioned assemblies over a PCA if and only if 
  it is \wlcc and well-pointed local, and has a discrete generic object.
\end{theorem}
\begin{proof}
  By Lemmas~\ref{lem:pca-funcompl} and~\ref{lem:dco-pca}, PCAs can be identified 
  with functionally complete shallow cartesian DCOs. 
  Proposition~\ref{prop:pasm-char} establishes a correspondence between shallow 
  cartesian DCOs and well-pointed local categories with discrete generic 
  objects, and by Proposition~\ref{prop:wlccc-funcompl} a shallow cartesian DCO 
  is functionally complete if and only if the corresponding local category is 
  \wlcc
\end{proof}
Lemma~\ref{lem:char-wp} gives the following reformulation.
\begin{corollary}\label{cor:char-pasm}
	A category is equivalent to partitioned assemblies over a PCA if and only 
	if it is \wlcc and local, and has a \emph{separated} discrete generic object. \qed
\end{corollary}

\section{Characterizing realizability toposes}

\newcommand{\catcex}{\exlex{\catc}}

In this section we derive a characterization of realizability toposes from
Corollary~\ref{cor:char-pasm} and the fact that realizability toposes are exact 
completions of partitioned assemblies. We start by recalling relevant facts 
about exact completion.

An \emph{exact} category is a finite-limit category with
pullback-stable regular-epi/mono factorizations, in which every equivalence
relation is a kernel pair.
Exact categories form a 2-category $\catex$ (1-cells
are functors preserving finite limits and regular epis), and the inclusion
functor $ \catex\incl\catlex$ from exact into finite-limit (`left exact')
categories has a left biadjoint
\begin{equation}\label{eq:biadj}
 \exlex{(-)}\,:\,\catlex\to\catex
 \end{equation}
known as \emph{exact completion}~\cite{carboni1982free}. 

An object $P$ in an exact category $\catx$ is called \emph{(regular) 
projective} if for every regular epimorphism $e:Y\epi X$ and every $f:P\to X$ 
there exists a $g:P\to Y$ with $eg=f$ 
\[
\begin{tikzcd}[row sep = small,column sep = small]
&	P 	\ar[rd,"f"]
		\ar[ld,dashed,"g"']
\\ 	Y 	\ar[rr,"e"',two heads]
&&	X
\end{tikzcd}	
\]
(we will drop the `regular' and simply say `projective'). Using this concept, we 
can characterize exact
completions\footnote{The first reference that I could find for 
Theorem~\ref{thm:exact-completion} is 
Robinson and Rosolini's~\cite[Proposition~4.1]{robinson1990colimit}. There it is 
attributed to Joyal, Carboni, and Celia-Magno, but on my inquiry Rosolini told 
me that they discovered it themselves and learned later from Carboni that he 
already knew.}.
\begin{theorem}\label{thm:exact-completion}
\begin{enumerate}
\item\label{thm:exact-completion-proj-image} For any finite-limit category
$\catc$, the unit functor $\catc\to\exlex{\catc}$ is full and faithful, and its 
essential image coincides with the full subcategory $\proj(\exlex{\catc})$ of 
$\exlex{\catc}$ on projective objects.
\item\label{thm:exact-completion-characterize} An exact category $\catx$ is 
(equivalent to) an exact completion if and only if 
it has \emph{enough projectives} (i.e.\ every object $X$ can be covered 
by a projective
$P$  via a regular epi $P\epi X$), and $\proj(\catx)$ is closed
under finite limits in $\catx$.\hfill${}_\blacksquare$
\end{enumerate}
\end{theorem}
We observe that being an exact completion of a finite-limit 
category is a \emph{property} of an exact category rather than additional 
structure. 
In particular, realizability toposes are exact completions:
\begin{theorem}\label{thm:rt-ex-comp}
	The realizability topos $\catrt\pcad$ over a PCA $\pcad$ is equivalent to
	the exact completion of the category $\pasm\pcad$ of partitioned assemblies
	over $\pcad$. \qed
\end{theorem}
This result depends on the axiom of choice.
As mentioned in the introduction it was shown by Robinson and 
Rosolini~\cite[2.2 and 4.2]{robinson1990colimit} for the effective topos. The 
generalization
to arbitrary PCAs is straightforward; the statement can be found \eg 
in~\cite[Section~2.3]{hofstra2003ordered} in even greater generality for OPCAs.

Combining Corollary~\ref{cor:char-pasm} with the preceding theorems immediately 
gives us the following.
\begin{proposition}\label{prop:char-rt}
	An exact category $\catx$ is equivalent to a realizability topos over a PCA 
	precisely if it has enough projectives, 
	projective objects are closed under finite limits in $\catx$,	
	and $\proj(\catx)$ is a \wlcc local 
	category with a separated discrete generic object.
\end{proposition}
\begin{proof}
	Realizability toposes $\catrt\pcad$ are exact completions of
	$\pasm\pcad$ by Theorem~\ref{thm:rt-ex-comp}, and hence have enough 
	projectives which are closed under finite limits by
	Theorem~\ref{thm:exact-completion}-\ref{thm:exact-completion-characterize}.
	By Theorem~\ref{thm:exact-completion}-\ref{thm:exact-completion-proj-image} 
	we 	have $\proj(\catx)\simeq\pasm\pcad$, and the latter category is \wlcc 
	local and has a separated discrete generic object by 
	Corollary~\ref{cor:char-pasm}.

	Conversely, assume that $\catx$ is exact, has enough projectives which are
	closed under finite limits, and $\proj(\catx)$ is \wlcc local with separated
	discrete generic object. Then $\proj(\catx)$ is a category of partitioned
	assemblies by Corollary~\ref{cor:char-pasm}, $\catx$ is its exact completion
	by Theorem~\ref{thm:exact-completion}, and is thus equivalent to 
	$\catrt\pcad$ by Theorem~\ref{thm:rt-ex-comp}.
\end{proof}

In the following we give a more elementary rephrasing of this result, by 
reformulating the conditions on $\proj(\catx)$ directly as conditions on 
$\catx$. For this we need the following lemma on exact completion of local categories.
\begin{lemma}\label{lem:ex-nabla}
	If $\catx$ is an exact completion and $\proj(\catx)$ is local, then $\catx$ 
	is local as well and the right adjoint to $\catx(1,-)$ is given the right 
	adjoint to $\proj(\catx)(1,-)$ composed with the inclusion 
	$\proj(\catx)\incl\catx$.
	\[\begin{tikzcd}
		\catset\ar[r,"\nabla"']\ar[rd,"\nabla"'] & \proj(\catx)\ar[d,hook]\ar[rd,"\Gamma"]\\
		& \catx\ar[r,"\Gamma"] & \catset
	\end{tikzcd}
	\]
\end{lemma}
\begin{proof}
Exact completion preserves local smallness
since every hom-set $\catx(X,Y)$ can be presented as subquotient of $\catx(P,Q)$ for projective covers $P$ and $Q$ of $X$ and $Y$, respectively.

To see that $\catx(1,-)$ has the claimed right adjoint, let $X\in\catx$ and let $P\epi X$ be a projective cover of $X$. Covering the kernel of $e$ by another projective $Q$ we can represent $X$ as a coequalizer 
 \[Q\xrightrightarrows[g]{f}P\xepi{e}X.\]
 The functor $\Gamma:\catx\to\catset$ is regular since $1$ is projective in 
 $\catx$, hence
 \[
 \Gamma Q\xrightrightarrows[\Gamma g]{\Gamma f}\Gamma P\xepi{\Gamma e}\Gamma X
 \]
 is a coequalizer as well. For $I\in\catset$ we have
 \begin{align*}
 \catx(X,\nabla I)&\cong\setof{h:P\to\nabla I}{h\circ f=h\circ g}\\
 &\cong\setof{k:\Gamma P\to I}{k\circ\Gamma f=k\circ\Gamma g}\cong
 \catset(\Gamma X,I)
 \end{align*}
 which means that $\catset(\Gamma-,I):\catx\op\to\catset$ is 
 represented by $\nabla I$.
\end{proof}
Thus, if $\catx$ is an exact completion of a local category then the 
localization on $\proj(\catx)$ is the restriction of the localization on 
$\catx$, which means in particular that morphisms/objects in $\proj(\catx)$ are 
closed/dense/separated in $\proj(\catx)$ if and only if they are so in $\catx$. 
The next lemma establishes the same fact for \emph{discreteness}.
\begin{lemma}\label{lem:preserve-discrete}
Let $\catx$ be an exact completion of a local category.
A projective object $D$ is discrete in $\catx$ if and only if it is discrete in 
$\proj(\catx)$.
\end{lemma}
\begin{proof}
	Clearly $D$ is discrete in $\proj(\catx)$ whenever it is in $\catx$. 
	Conversely assume that $D$ is discrete in $\proj(\catx)$, and let 
	$u:Y\epi X$ in $\catx$ be closed with $\Gamma u$ surjective. Then $\Gamma u$
	splits by the axiom of choice, and $u$ splits since split epics are stable 
	under
	functors and pullbacks. Thus $u$ is in particular regular epic.
	Let $e:P\epi X$ be a projective cover of $X$, and consider the following 
	diagram.
	\[
	\xymatrix{
		Q
		\depi[r]^p
		\depi[d]_v
		\pullbackcorner 
		\emar[dr]|{(\ast)}
		& Y
		\ar[r]^{\eta_Y}
		\depi[d]_u
		\pullbackcorner 
		& \nabla\Gamma Y \depi[d]^{\nabla\Gamma u}
		\\ P 
		\depi[r]_e
		& X 
		\pullbackcorner[lu]
		\ar[r]_{\eta_X}
		& \nabla\Gamma X
	}
	\]
	The square ($\ast$) is the pullback of $u$ and $e$, and since both are 
	regular	epis it is also a pushout (easy exercise in regular categories, 
	compare and contrast with the fact that a pullback square in an 
	\emph{abelian} category is a pushout already if one of the legs is an 
	epi~\cite[Section~2.5]{freyd2003abelian}). $Q$ is projective as pullback of 
	the outer square, and $v$ is closed with surjective $\Gamma v$ since the 
	same is true for $u$. Now let $f:Y\to D$. By discreteness of $D$ in 
	projectives there exists a $g:P\to D$ with $fp=gv$, and since ($\ast$) is a 
	pushout there exists an $h:X\to D$ with $hu=f$ and $he=g$.
\end{proof}
The preceding lemmas together with Carboni and Rosolini's result~\cite
{carboni2000locally} that an exact completion $\catx$ is locally cartesian 
closed precisely if $\proj(\catx)$ is \wlcc yield the following reformulation of 
Proposition~\ref{prop:char-rt}.
\begin{theorem}\label{thm:char-rt'}
A locally small category $\catx$ is equivalent to a realizability topos over a 
PCA precisely if
\begin{enumerate}
\item $\catx$ is exact and locally cartesian closed,
\item $\catx$ has enough projectives and projective objects are closed under 
finite limits in $\catx$,
\item $\Gamma:\catx\to\catset$ has a right adjoint factoring through 
$\proj(\catx)\incl\catx$, and
\item there is a discrete and separated projective object $G$ admitting a closed 
map from any other projective.
\end{enumerate}
\qed
\end{theorem}

\subsection*{Acknowledgements}

The present work is based on the author's PhD thesis~\cite{frey2013fibrational}.
Thanks to my supervisor Paul-André Melliès for support and encouragement, to 
Thomas Streicher for long email exchanges, to Pino Rosolini for discussions 
about exact completion, and to the anonymous referees for helpful feedback.

The author acknowledges partial support  by the ERC Advanced Grant ECSYM, the 
Danish Council for Independent Research Sapere Aude grant ``Complexity via Logic 
and Algebra'' (COLA), and the U.S. Air Force Office of Scientific Research MURI 
grant FA9550-15-1-0053.

\nocite{lietz2002impredicativity}

\bibliographystyle{amsalpha}
\bibliography{../shared/bib.bib}
\end{document}